\documentclass[10pt]{amsart}
\usepackage{amscd, amsfonts, amsmath, amsthm, amssymb}
\usepackage{mathtools}
\usepackage{tabularx}
\usepackage{longtable}
\usepackage{enumerate}
\usepackage{booktabs}
\usepackage{comment}
\usepackage[table]{xcolor}
\usepackage{cases}

\usepackage
[  backref,
linkcolor = red,
letterpaper = true,
hyperindex = true,
hidelinks = true,
linktocpage = true
]
{hyperref}

\newcommand{\Z}{\mathbb{Z}}
\newcommand{\F}{\mathbb{F}}

\newcommand{\R}{\mathbb{R}}

\theoremstyle{plain}
\newtheorem{theorem}{Theorem}[section]
\newtheorem{conjecture}[theorem]{Conjecture}

\theoremstyle{definition}

\newtheorem{remark}[theorem]{Remark}
\newtheorem{remarks}[theorem]{Remarks}

\numberwithin{table}{theorem}

\begin{document}

\author{Jos\'e Alejandro Lara Rodr\'iguez}

\title[Two term relations between multizeta of depth two for $\mathbb{F}_q{[t]}$]{Two term relations between multizeta of depth two for $\F_q[t]$}

\address{
Facultad de Matem\'aticas, Universidad Aut\'onoma de Yucat\'an, Perif\'erico
Norte, Tab. 13615,
M\'erida, Yucat\'an, M\'exico, alex.lara@correo.uady.mx}


\date{\today}

\keywords{Multizeta, zeta-like, Eulerian, trascendence}

\subjclass[2010]{11M32, 11G09, 11G30}


\begin{abstract}

We focus on multizeta values of depth two for $\F_q[t]$, where the ratio with another multizeta value of depth two is rational.  In characteristic 2, we prove some extra relations between multizeta values of depth 2 and the same weight.

\end{abstract}

\maketitle

\section{Introduction}

In 1775, more than 30 years after Euler introduced the zeta values $\zeta(s)$, he introduced and studied the multizeta values
\begin{align*}
\zeta(s_1, \dotsc, s_r) & := \zeta_{\Z}(s_1, \dotsc, s_r)\\& :=
\sum _{n_1 > \dotsb > n_r>0} \frac{1}{n_1^{s_1} \dotsm n_r^{s_r} } \in \R, \quad (s_i\in \Z, \, s_i \ge 1, s_1>1).
\end{align*}
Here  $r$ is called the depth  and $\sum s_i$ the weight. 
We say that a multizeta value $\zeta(s_1, \dotsc, s_r)$ is {\it zeta-like} if the ratio $\zeta(s_1, \dotsc, s_r)/\zeta(s_1+ \dotsb + s_r)$ is rational. A multizeta value of weight $w$ is called {\it Eulerian} if it is a rational multiple of $(2\pi i)^{w}$.    
Euler  discovered interesting relations such as  $\zeta(3,1) = \zeta(4)/4$ and $\zeta(2,1) = \zeta(3)$. These relations   generalizes to $\zeta(2, \{1\}_k) = \zeta(k+2)$ (special case of Hoffman-Zagier duality relation) and $\zeta(\{3,1\}_k) = \zeta(\{2\}_{2k}) /(2k+1)$ (Broadhurst's result, conjectured by Zagier) which is known to be Eulerian as $\zeta(\{2\}_k) = \pi ^{2k}/(2k+1)!$. Here we use the standard short-form ${X}_k$ for the tuple $X$ repeated $k$ times (See \cite[Remark after Conjecture 4.3]{LT14}.) 

The relations among multizeta values have been studied extensively for the past three decades. They have resurfaced with renewed interest because of their connections with various fundamental objects and structures in mathematics and mathematical physisc.   In some sense, the relations have been at least conjecturally understood, though much remains to be proved. See e.g., \cite{Z16} and references in there.

Zagier \cite{Zagier} conjectured that the dimension of  $\mathcal{Z}_w$, the  $\mathbb{Q}$-vector space spanned by the multizetas of weight $w$ is $d_w$, where $d_w = d_{w-2}+ d_{w-3}$ with $d_0 = 1$, $d_1 = 0$ and $d_2 = 1$. Later, Hoffman \cite{Hoffman1997} conjectured $\mathcal{Z}_w$ is generated by the basis of multizetas of weight $w$ of the form $\zeta(s_1, \dotsc, s_r)$ with $s_i \in \{2,3\}$. Brown \cite{Brown2012} proved that $\mathcal{Z}_w$ is generated by multizetas of weight $w$ of the form $\zeta(s_1, \dotsc, s_r)$ with $n_i \in \{2,3\}$ and so the dimension of $\mathcal{Z}_w$ is bounded above by $d_w$. 

\section{Multiple zeta values in positive characteristic}

We will look at the function field analog focusing in the multizeta values for $\F_q[t]$, where $\F_q$ is a finite field of characteristic $p$ consisting of $q$ elements. For a survey of work on  function field analogs of multizeta, with connections to Drinfeld modules and Anderson's $t$-motives  (see \cite{A86, G96, T} for background), we refer to the survey \cite{T17}. We first fix the notation and give the basic definitions.

\begin{tabular}{l l}
$\Z$  & \{integers\},\\
$\Z_+$ &  \{positive integers\},\\
$q$ &  a power of a prime $p$, \\
$\F_q$ & a finite field of $q$ elements,\\
$A$ & the polynomial ring $\F_q[t]$, $t$ a  variable\\
$A_+$ &   monics in  $A$,\\
$A_{d+}$ & $\{\text{elements of } A_+ \text{ of degree }d\}$, \\
$A_{<d}$ & $ \{\text{elements of } A \text{ of degree less than }d \}$, \\
$K$ &   the function field $\F_q(t)$,\\
$K_\infty$ & $\F_q((1/t)) = $ the completion of $K$ at $\infty$,\\
$[n]$ &   $ t^{q^n}-t$,\\
$D_n$ & $ \prod _{i= 0}^{n-1} (t^{q^n}-t^{q^i}) = [n][n-1]^q \dotsm
[1]^{q^{n-1} }$,\\
$\ell_n$ & $ \prod _{i= 1} ^n (t-t^{q^i}) = (-1)^n L_n = (-1)^n [n][n-1]\dotsm [1]$, \\
`even' & $  \text{multiple of } q-1$,\\
\end{tabular}

In \cite{C35} Carlitz introduced the  zeta values $\zeta(s)$ for $s$ a positive integer given by 
\begin{align*}
 \zeta(s) = \sum _{a\in A_+} \frac{1}{a^s} \in K_\infty.
\end{align*}
These are the Carlitz zeta values. See \cite{T} and references therein for more in this and basic analogies between function field and number field situations. Next, we recall definitions of power sums, iterated power sums, zeta and multizeta values \cite{T, Tm}.

For $k, k_i\in \Z_+$ and $d\geq 0$, consider the power sums (this is  $S_d(-k)$ in the notation of \cite{T})
\begin{align*}
S_d(k) := \sum _{ a\in A_{d+}} \frac{1}{a^{k}} \in K,
\end{align*}
and extend inductively to the iterated power sums 
\begin{align*}
S_d(k_1, \dotsc, k_r)  =  S_d(k_1) \, S_{<d}(k_2, \dotsc, k_r)
 =     S_d(k_1)  \sum_{d>d_2>\cdots>d_r}
S_{d_2}(k_2)\cdots S_{d_r}(k_r),& 
\end{align*}
where $S_{<d}=\sum_{i=0}^{d-1}S_i$ as the notation suggests.
For positive integers $s_i$, we consider the multizeta values
\begin{align*}
\zeta(s_1, \cdots, s_r):= \sum_{d=0}^{\infty} S_d(s_1, \cdots, s_r)=\sum  \frac{1}{a_1^{s_1}\cdots a_r^{s_r}}\in K_{\infty}, 
\end{align*}
where the second sum is over all $a_i\in A_+$ of degree $d_i$ such that
$d_1>\dotsb >d_r\ge 0$. We say that this multizeta value (or rather the tuple $(s_1, \dotsc, s_r)$) has depth $r$ and weight  $\sum s_i$. In depth one, we recover the Carlitz zeta.


Following \cite{LT14}, we call the $r$-tuple $(s_1,\dotsc, s_r)$ {\it zetalike}, if $\zeta(s_1, \dotsc, s_r)/\zeta(s_1+ \dotsb + s_r)$ is rational. A multizeta value of weight $w$ is called {\it Eulerian}, if  it is a rational multiple of $\tilde{\pi}^w$.

Since $\zeta(ps_1, \dotsc, ps_r)/\zeta(ps_1', \dotsc, ps_{r'}') = (\zeta(s_1, \dotsc, s_r)/\zeta(s_1', \dotsc, s_{r'}'))^p$, in all the discussion we can restrict to tuples of tuples where not all entries are divisible by $p$. 

\begin{remarks}[Trivial cases]
\
\begin{enumerate}
 \item 

If $\mathbf{s}$ is a zetalike $r$-tuple and $\mathbf{s'}$ is a zetalike $r'$-tuple and both are of the same weight, then $\zeta(\mathbf{s})/\zeta(\mathbf{s'})$ is rational.

\item If $\zeta(\mathbf{s})/\zeta(\mathbf{s''})$ and $\zeta(\mathbf{s'})/\zeta(\mathbf{s''})$ are rational, then $\zeta(\mathbf{s})/\zeta(\mathbf{s'})$ is rational. 
 \end{enumerate}

\end{remarks}

The analogues of Zagier-Hoffman's conjectures in positive characteristic were formulated by Thakur in \cite{T17} and by  Todd in \cite{Todd2018} as follows: If $d(w)$ denotes the dimension of the $K$-span of multizeta values of weight $w$, then $d(w)=2^{w-1}$ for $1 \le w <q$, $d(w) = 2^{w-1}-1$ for $w = q$ and for $w>q$, Todd conjectured $d(w) = \sum _{i=1}^q d(w-i)$. Thakur's conjecture says that the basis of this span 
is given by multizeta values  $\zeta(s_1, ..., s_r)$ of weight $w$, with 
$s_i\leq q$ for $1\leq i<r$ and $s_r<q$. Both these conjectures are proved recently by  Dac in  \cite{TuanNgoDac2021}.

In \cite{LT14} we proved (see also \cite{CPY19,  C17, T17}) some  multizeta families  to be Eulerian for the rational function field case, and conjectured that these are the only Eulerian multizetas, but could only prove and conjecture several zetalike families of  weights which are not $q$-even, without getting full characterization, even conjecturally.  

In \cite{LT2021}, we prove some relations between the multizeta values for positive genus function fields of class number one and conjecture some relations between them; also, we looked for possible rational ratios of multizeta of depth two or three of the same weight, not explained by our conjecture on the zeta-like family, but we did not find any.

Earlier, we looked at two-term linear relations with rational coefficients, where one term was zeta and one multizeta, now we generalize to both multizeta values, using the same continued fraction method to detect and use power series manipulations to prove.

In this paper, we  looked  for rational ratios of multizeta of depth two of the same weight, not explained by our theorems or conjectures in \cite{LT14}.  We did find several examples where $\zeta(a,b)/ \zeta(c,d)$ is rational with $a+b = c+d$, where neither the numerator nor the denominator are zetalike. We did find a relation between a multizeta of arbitrary depth $r \ge 2$ and a multizeta of depth two (see \cite{Lara2022Preprint})

\section{Results on multizeta  values}

We now state some relations between multizeta values of depth two. Proofs will given in the fourth section. 

\begin{theorem}
\label{thm}
For $A = \F_q[t]$, where $q$ is any prime power, we have

\begin{align}
\label{thm1}
 \zeta (q^{n+k}-q^n,\, q^{n+k}-1) = \frac{ [k]^{q^n}  }{[n+k]}\zeta(q^{n+k}-1, \, q^{n+k}-q^n),
\end{align}
where $n \ge 0$ and $k \ge 1$.

Let $k \ge 1$,  $1 \le s <q$, $0 \le k_i, l_i<k$. We have
\begin{align}
\label{thm2}
\zeta(q^k - \sum _{i=1}^s q^{k_i}, \, sq^k - \sum _{i=1}^s q^{l_i}   ) 
= \prod_{i=1}^s \frac{[k-k_i]^{q^{k_i}} }{ [k-l_i]^{q^{l_i}}  } 
\zeta(q^k - \sum _{i=1}^s q^{l_i},\, sq^k - \sum _{i=1}^s q^{k_i} )
\end{align}

Let $k\ge 1$,  $1 \le s_1\le s_2 \le q$, $s_1 \ne q$ and let $0 \le k_i<k$ and $0\le l_j<k$. We have
\begin{multline}
\label{thm3}
\zeta(q^k - \sum _{i=1}^{s_1} q^{k_i}, s_2q^k - \sum _{j=1}^{s_2} q^{l_j}   ) \\
= \prod_{i=1}^{s_1} \frac{[k-k_i]^{q^{k_i}} }{ [k-l_i]^{q^{l_i}}  } 
\zeta(q^k - \sum _{i=1}^{s_1} q^{l_i}, s_2q^k - \sum _{j=1}^{s_1} q^{k_j} -\sum_{j=s_1+1}^{s_2} q^{l_j }  ).
\end{multline}

\end{theorem}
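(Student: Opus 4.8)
The plan is to expand each depth-two value by degree,
\[
\zeta(s_1,s_2)=\sum_{d_1>d_2\ge 0}S_{d_1}(s_1)\,S_{d_2}(s_2)=\sum_{d_1\ge 1}S_{d_1}(s_1)\,S_{<d_1}(s_2),
\]
so that \eqref{thm3} becomes an identity between two explicit power series in $1/t$, i.e.\ two elements of $K_\infty$. The only input I need is a closed form for the power sums $S_d$ at the two shapes occurring here: the first slot $q^k-\sum_{i=1}^{s_1}q^{m_i}$ has leading coefficient $1$ and (since $s_1\ne q$, hence $s_1\le q-1$) subtracts fewer than $q$ powers, while the second slot is $s_2q^k-\sum_{j=1}^{s_2}q^{m_j}=\sum_{j=1}^{s_2}(q^k-q^{m_j})$. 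Both are accessible from Carlitz's evaluation of $\sum_{a\in A_{d+}}a^{-n}$, for instance through the generating identity whose right-hand side is $-P'(x)/P(x)$ with $P(x)=\prod_{a\in A_{d+}}(x-a)$, together with the Frobenius relation $S_d(q^in)=S_d(n)^{q^i}$, which lets me strip off common $q$-powers and shrink the support, the simplest instance being the exponent $q^r-1$.

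The constant $\prod_{i=1}^{s_1}[k-k_i]^{q^{k_i}}/[k-l_i]^{q^{l_i}}$ should emerge as the ratio of the leading terms (e.g.\ at $d=1$) of the first-slot power sums under the swap $k_i\leftrightarrow l_i$: to top order one expects $S_d\big(q^k-\sum_i q^{m_i}\big)$ to factor as $\big(\prod_i[k-m_i]^{q^{m_i}}\big)$ times a quantity insensitive to the individual $m_i$, so that the $k$- and $l$-versions differ exactly by that product. A first sanity check, which I would record at the outset, is that the substitution preserves weight: the change $\sum_i(q^{l_i}-q^{k_i})$ in the first slot is cancelled by the change $\sum_i(q^{k_i}-q^{l_i})$ in the second, where only the first $s_1$ subtracted powers are exchanged and the spectator powers $q^{l_j}$ for $s_1<j\le s_2$ stay put. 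In the special case $s_1=s_2<q$ there are no spectators and the swap exchanges the two full index sets between the slots, so \eqref{thm3} specializes to \eqref{thm2}.

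After substituting the closed forms I would factor out the constant and compare $\zeta\big(q^k-\sum_i q^{k_i},\,s_2q^k-\sum_j q^{l_j}\big)$ with the scaled second multizeta, organizing both as sums over the outer degree $d_1$. The delicate point is that the per-degree ratios $S_d(a)/S_d(a')$ do \emph{not} equal the constant once $d\ge 2$: exponents of type $q^r-1$ (and more generally those with full base-$q$ digits) carry genuine lower-order tails, so the identity is emphatically not a term-by-term cancellation. I therefore expect to prove a per-$d_1$ relation only up to a difference of the form $T_{d_1}-T_{d_1-1}$ and then sum over $d_1$, the telescoping collapsing everything but a boundary term at $d_2=0$, which vanishes because $S_0\equiv 1$ makes the two boundary contributions coincide. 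The spectator indices are dragged unchanged through this bookkeeping and appear only inside a common factor on both sides.

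The main obstacle is precisely this global, non-term-wise cancellation: it forces one to carry the exact power-sum formulas \emph{with} their lower-order tails and to verify that, after scaling by the constant, the two double sums differ by a telescoping series rather than matching slot by slot. A second difficulty is the boundary case $s_2=q$ permitted by the hypotheses (only $s_1\ne q$ is excluded): there $s_2q^k=q^{k+1}$ introduces a carry in base $q$, so the closed form for $S_d\big(s_2q^k-\sum_j q^{m_j}\big)$ changes shape and must be handled separately. I expect this is exactly where the constraints $s_1\le s_2$ and $s_1\ne q$ are used.
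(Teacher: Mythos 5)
Your central structural claim is false, and it is precisely the point on which the whole proof turns. You assert that the identity ``is emphatically not a term-by-term cancellation'' and that one can only hope for a per-$d_1$ relation up to a telescoping difference $T_{d_1}-T_{d_1-1}$ with a boundary term. In fact the ratio of the \emph{full} degree-$d$ terms, $S_d(a)S_{<d}(b)\big/S_d(a')S_{<d}(b')$, equals the claimed constant for every single $d$, and the theorem follows by summing over $d$ with no telescoping whatsoever. What you overlooked is that, while $S_d(a)/S_d(a')$ alone is indeed not constant (it carries powers of $[d]$, as you say), the swap of indices moves each exchanged exponent \emph{between the two slots}: after applying the factorizations \eqref{Sd} and \eqref{Sltd} (consequences of Theorems 1 and 3 of \cite{LT2015}), each factor $S_d(q^k-q^{k_i})$ in the numerator is paired against $S_{<d}(q^k-q^{k_i})$ in the denominator, and vice versa for the $l_i$. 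By the relation \eqref{rel1} twisted by Frobenius, each such pair contributes $[k-k_i]^{q^{k_i}}/[d]^{q^k}$ or its reciprocal, so all powers of $[d]$ cancel and the per-degree ratio is exactly $\prod_i[k-k_i]^{q^{k_i}}/[k-l_i]^{q^{l_i}}$. Your heuristic that $S_d(q^k-\sum_i q^{m_i})$ factors as $\prod_i[k-m_i]^{q^{m_i}}$ times a quantity insensitive to the $m_i$ is also incorrect as stated: by \eqref{Sdqjminus1} each factor is $S_d(q^k-q^{m_i})=\left(\ell_{d+k-m_i-1}/(\ell_{k-m_i-1}\ell_d^{q^{k-m_i}})\right)^{q^{m_i}}$, which entangles $d$ and $m_i$; only the ratio $S_d/S_{<d}$ at the \emph{same} exponent has the clean form $[k-m_i]^{q^{m_i}}/[d]^{q^k}$.

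Beyond this, the proposal does not actually prove anything: no candidate $T_{d_1}$ is constructed, no boundary computation is carried out, and the closed forms ``with their lower-order tails'' are never written down, so none of \eqref{thm1}--\eqref{thm3} is established. Your predicted difficulty at $s_2=q$ is also not where you place it: the second slot only ever enters through $S_{<d}$, and the factorization \eqref{Sltd} is valid for $1\le s\le q$ \emph{including} $s=q$, so no base-$q$ carry analysis is needed there. The genuine role of the hypothesis $s_1\ne q$ is that the factorization \eqref{Sd} of the outer sum $S_d$ (degree exactly $d$, not degree $<d$) requires $s<q$; that part of your intuition was right, but it concerns the first slot only.
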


\begin{remarks}
\
\begin{enumerate}

\item 
 In \eqref{thm2}, when $q = 2$, $k = 3$,  $s = 1$, $k_1 = 2$ and $l_1 = 1$,  we get the pair of tuples $(4,6), (6,4)$; note that we can not recover these tuples from \eqref{thm1}.

 \item 
 In \eqref{thm3}, when $q = 3$, $k =2$ and $s_1 = 1$, $s_2 = 2$, $k_1 = 1$ and $l_1 = 0$, $l_2 = 1$ we get the pair of tuples $(6,14), (8, 12)$; we can not recover these tuples from \eqref{thm2}.

\end{enumerate}
 
\end{remarks}

Next, the main theorem which implies the parts of  Theorem~\ref{thm}, but without giving the explicit rational factors there.

\begin{theorem}[Main theorem]
\label{main-thm}
Let $q$ be an arbitrary prime power. 
Let $1 \le s_1<q$, $1 \le s_2\le q$ and let $0 \le k_i< k$, $0\le l_j<k$. 
Let $a = q^k - \sum _{i = 1}^{s_1} q^{k_i} $ and $b = s_2 q^k - \sum _{j=1}^{s_2} q^{l_j}$. 
Choose $s_1$ elements from the list $k_1, \dotsc, k_{s_1}, l_1,  \dotsc, l_{s_2}$, say $k_{1}', \dotsc, k_{s_1}'$ and let $l_1',\dotsc, l_{s_2}'$ the remaining elements. Let $a' = q^k- \sum _{i=1}^{s_1} q^{k_i'} $ and $b' = s_2 q^k -\sum _{j=1}^{s_2} q^{l_j'}$. 
Then $\zeta(a,b)/\zeta(a',b')$ is rational. 

\end{theorem}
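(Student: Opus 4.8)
The plan is to reduce the statement to a sequence of elementary swaps, evaluate the relevant power sums in closed form, and then recover the rational factor by resumming the defining double series.

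First I would reduce to a single swap. The tuples $(a,b)$ and $(a',b')$ are built from the same multiset $\mathcal{M}=\{k_1,\dotsc,k_{s_1},l_1,\dotsc,l_{s_2}\}$ of exponents, the only difference being which $s_1$-element sub-multiset of $\mathcal{M}$ is assigned to the first coordinate. Any two such sub-multisets are connected by transpositions; a transposition of two exponents lying in the same coordinate leaves the tuple unchanged (since $a$ depends only on the multiset $\{k_i\}$), so it suffices to treat an elementary swap that moves one exponent $u$ out of the first coordinate and one exponent $v$ into it, yielding $\hat a=a+q^{u}-q^{v}$, $\hat b=b-q^{u}+q^{v}$. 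Every intermediate tuple again satisfies the hypotheses ($s_1<q$, $s_2\le q$, and exponents in $[0,k)$ are all preserved), so by the transitivity of rational ratios recorded in the Trivial Cases remarks it is enough to handle one such swap. Writing it multiplicatively,
\[
\zeta(\hat a,\hat b)=\sum_{d_1>d_2\ge 0}\ \sum_{\substack{\deg f=d_1,\ \deg g=d_2\\ f,g\ \mathrm{monic}}}\frac{1}{f^{a}g^{b}}\Bigl(\frac{f}{g}\Bigr)^{q^{v}-q^{u}},
\]
so the task becomes to show that inserting the twist $(f/g)^{q^{v}-q^{u}}$ multiplies the series by an element of $K$.

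Next I would establish closed forms for the two shapes of power sum that occur: $S_{d}\!\left(q^{k}-\sum_{i}q^{k_i}\right)$ with fewer than $q$ subtracted powers, and $S_{d}\!\left(s_2q^{k}-\sum_{j}q^{l_j}\right)=S_{d}\bigl(\sum_{j}(q^{k}-q^{l_j})\bigr)$ with at most $q$ of them. Since the $q^{j}$-power map is additive and fixes $\F_q$, one has $S_{d}(q^{j}m)=S_{d}(m)^{q^{j}}$, which strips the outer Frobenius; the remaining sum is evaluated by summing over the $\F_q$-coefficients of a monic polynomial of degree $d$ one at a time. Here the hypotheses $s_1<q$ and $s_2\le q$ are essential: they keep each inner sum $\sum_{c\in\F_q}$ in the nondegenerate range, so no base-$q$ carrying occurs and the answer collapses to a single product of $[\,\cdot\,]$'s over a power of $L_d$. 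For example this procedure gives
\[
S_{d}\!\left(q^{k}-q^{k_1}\right)=\frac{\prod_{i=0}^{d-1}[\,k-k_1+i\,]^{q^{k_1}}}{L_{d}^{\,q^{k}}},
\]
and the general shapes are treated the same way, by induction on the number of subtracted powers.

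Finally, substituting these formulas into $\zeta(a,b)=\sum_{d_1>d_2}S_{d_1}(a)S_{d_2}(b)$ and comparing with $\zeta(\hat a,\hat b)$, I would carry out the resummation. The relation is \emph{not} term by term: one checks on small cases that $S_{d}(a)/S_{d}(\hat a)$ is not constant, so the rational factor only appears after summation. The mechanism is telescoping: with $R=[\,k-u\,]^{q^{u}}/[\,k-v\,]^{q^{v}}\in K$ (the factor appearing in Theorem~\ref{thm}), the weighted summand difference $S_{d_1}(a)S_{d_2}(b)-R\,S_{d_1}(\hat a)S_{d_2}(\hat b)$ should be expressible as a difference of consecutive terms of an explicit auxiliary sequence in $d_1,d_2$, with vanishing boundary contributions at infinity and along the diagonal $d_1=d_2$; summing then collapses everything to $\zeta(a,b)=R\,\zeta(\hat a,\hat b)$, which gives rationality and, as a bonus, the explicit factor. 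The main obstacle is exactly this resummation, together with the $s\ge 2$ instances of the power-sum formulas feeding it: the coupling of the two indices through the products $\prod_{i=0}^{d-1}[\,\cdot+i\,]$ makes the telescoping quantity nonobvious, and verifying that the cross terms produced by the twist organize into an exact difference is where the real work lies. For the Main Theorem one needs only that the factor lies in $K$, so it suffices to exhibit the telescoping qualitatively without tracking $R$; the constraints $s_1<q$ and $s_2\le q$ are precisely what keep the power sums single products and prevent the telescoping from acquiring extra terms.
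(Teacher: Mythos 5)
Your reduction to a single swap via transitivity is sound, and your closed-form evaluations are roughly right, but the core of your argument is missing, and the premise that drives you to it is mistaken. You observe that $S_d(a)/S_d(\hat a)$ is not constant in $d$ and conclude that ``the rational factor only appears after summation,'' which forces you into a telescoping/resummation scheme whose auxiliary sequence, cross terms, and boundary behavior you never exhibit --- you yourself flag this as ``where the real work lies.'' That deferred step is the entire proof; nothing in the proposal shows the telescoping quantity exists. Moreover, it is the wrong mechanism. The relation \emph{is} term by term, at the right level of grouping: in depth two, $S_d(a,b)=S_d(a)\,S_{<d}(b)$, and an elementary swap moves the exponent $u$ from the $S_d$-slot to the $S_{<d}$-slot while moving $v$ the other way. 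Writing $q^k-q^u=q^u(q^{k-u}-1)$ and applying \eqref{rel1} (raised to the power $q^u$) gives $S_d(q^k-q^u)/S_{<d}(q^k-q^u)=[k-u]^{q^u}/[d]^{q^k}$; hence, after factoring $S_d(a)$, $S_d(\hat a)$ by \eqref{Sd} and $S_{<d}(b)$, $S_{<d}(\hat b)$ by \eqref{Sltd} and cancelling the unswapped factors,
\[
\frac{S_d(a)\,S_{<d}(b)}{S_d(\hat a)\,S_{<d}(\hat b)}
=\frac{S_d(q^k-q^u)}{S_{<d}(q^k-q^u)}\cdot\frac{S_{<d}(q^k-q^v)}{S_d(q^k-q^v)}
=\frac{[k-u]^{q^u}}{[d]^{q^k}}\cdot\frac{[d]^{q^k}}{[k-v]^{q^v}}
=\frac{[k-u]^{q^u}}{[k-v]^{q^v}},
\]
with the $[d]^{q^k}$ factors cancelling exactly. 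The ratio is independent of $d$, so summing over $d$ immediately yields $\zeta(a,b)=R\,\zeta(\hat a,\hat b)$ with the explicit constant $R$ --- no resummation, no boundary analysis, no telescoping. Your non-constancy check misled you because you expanded $S_{<d_1}(b)$ into individual terms $S_{d_2}(b)$: in the variables $(d_1,d_2)$ there is indeed no termwise relation, but that expansion destroys exactly the structure (the grouping of the inner sum into $S_{<d_1}$) that makes the cancellation happen.

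A second, smaller defect: your plan requires a closed form for $S_{d_2}(b)$ where $b$ has up to $q$ subtracted powers, but the product formula \eqref{Sd} holds only for $s<q$ strictly; at $s=q$ base-$q$ carrying does occur, so your ``nondegenerate range'' claim fails precisely at the boundary case $s_2=q$ that the theorem allows. The paper's proof never needs $S_d(b)$ at all: the second coordinate only ever appears as $S_{<d}(b)$, and the factorization \eqref{Sltd} for $S_{<d}$ is valid for all $s\le q$. This asymmetry between \eqref{Sd} and \eqref{Sltd} is exactly why the hypotheses read $s_1<q$ but $s_2\le q$, and any correct proof has to respect it.
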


\begin{remark}
Let $a = q^k- \sum _{i=1}^{s_1} q^{k_i}$ where $1 \le s_1<q$, $0 \le k_i<k$ and $s_2 = q$ and $l_j = k-1$, $b = s_2 q^k - \sum _{j=1}^{s_2}q^{l_j} = q^{k+1}- q^k$ so that $\zeta(a,b)$ is zetalike by \cite[Theorem 3.1 $(1)$, p. 790]{LT14}. 
 
\end{remark}

\begin{remarks}
\
\begin{enumerate}

\item  When $q = 3$, the pair of tuples $(3,8), (5,6)$ is obtained from Theorem~\ref{main-thm} but not from \eqref{thm3}.


\item When $q = 4$, numbers like $5$,  $19$ or $20$ among others can not be obtained from \eqref{Sd}; also, numbers like $18$ or $21$ are not covered by \eqref{Sltd1};  tuples like ((4, 21), (7, 18)) or ((19, 45), (25, 39)) among many others are not covered by Theorem~\ref{main-thm} since its proof relies  precisely on those  formulas. See Table~\ref{tableq4}.

\end{enumerate}
 
\end{remarks}

Next theorem covers some  tuples not covered by the main theorem.

\begin{theorem}
\label{thm-q-power-of-2}

 Let $q$ be any power of $2$. Let $s \le q$ and let $k_1 \dotsc, k_{s-1} \in \{0,1\}$ with $k_1 \le \dotsb \le k_{s-1}$ (just to avoid redundancies). Let $b = sq^2 - \sum _{i=1}^{s-1} q^{k_i}-1$ and $b' = sq^2- \sum _{i=1}^{s-1}q^{k_i} - q$. Then 
 
\begin{align}
\label{f1}
\zeta(2, b)  &= \frac{[1]^q}{[2]} \zeta(q+1,b').
\end{align}
Also, 
\begin{align}
\label{f21}
 \zeta(2, 2q^2 - 3q + 1) = \frac{[1]^q}{[2]} \zeta(q + 1, 2q^2 - 4q + 2).
\end{align}

Let $s<q$ and let $k_i, l_i \in \{0,1\}$ such that $l_1 \le \dotsb \le l_s$, $k_1 \le \dotsb \le k_s$, $l_i \le k_i$ and $l_i<k_i$ for exactly one value of $i$. Then
\begin{align}
\label{f2}
 \zeta(q^2 - \sum _{i=1}^s q^{k_i}, 2q^2-3q+1) = \frac{[1]^q}{[2]} \zeta(q^2- \sum _{i=1}^s q^{l_i}, 2q^2-4q+2).
\end{align}

Let $0 \le i \le 2$ and $0 \le j \le 2-i$.
Then for $b =q^3 - iq^2 - jq-(q-i-j)$ and $b' = q^3 - iq^2 - (q-(2-j))q - (2-i-j)$ we have
\begin{align}
\label{f3}
\zeta(4q-2,b) = \frac{[1]^{(q-2)q}}{[2]^{q-2}}\zeta(q^2+q, b').
\end{align}

For $q>2$ a power of $2$ and $0 \le i \le 2$ we have
\begin{align}
\label{f4}
 \zeta(3q-1, q^3-(q-1)-q^i) &= \frac{[1]^{(q-1)q}}{[2]^{q-1} } \zeta(q^2+q, q^3-(q-1)q-q^i).
\end{align}
 
\end{theorem}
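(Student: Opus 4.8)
The plan is to prove all five identities \eqref{f1}--\eqref{f4}, including the sporadic \eqref{f21}, by expanding each multizeta through its defining power-sum series and reducing to an identity of rational functions in $t$. Writing $\zeta(x,y)=\sum_{d\ge 1}S_d(x)\,S_{<d}(y)$, the first observation is that every \emph{second} argument occurring here is even, i.e.\ a multiple of $q-1$: reducing modulo $q-1$ gives $b\equiv b'\equiv 0$ in each case (for instance $2q^2-3q+1=(2q-1)(q-1)$ and $2q^2-4q+2=2(q-1)^2$, and in \eqref{f3}, \eqref{f4} the congruences $b\equiv 1-q\equiv 0$ drop out immediately). Consequently the closed product formula \eqref{Sltd1} for $S_{<d}$ of even integers applies to each inner sum appearing here, turning every $S_{<d}(b)$ and $S_{<d}(b')$ into an explicit rational function of $t$ and $d$. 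This reduces each identity to understanding the outer power sums $S_d(a)$ and $S_d(a')$, and to matching them against the explicit inner sums.

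The second step is exactly where these tuples leave the reach of Theorem~\ref{main-thm}: the first arguments $2,\,q+1,\,3q-1,\,4q-2,\,q^2+q$ are \emph{not} all covered by \eqref{Sd} (e.g.\ for $q=4$ the values $5$ and $20$ are excluded), so I cannot simply cite it, and this is precisely why new input is needed. Instead I exploit that $q$ is a power of $2$, where squaring is the Frobenius: $S_d(2)=S_d(1)^2=1/\ell_d^2$, more generally $S_d(2m)=S_d(m)^2$, and $S_d(q(q+1))=S_d(q+1)^q$ reduces the argument $q^2+q$ of \eqref{f3} to $q+1$. For the remaining odd arguments $q+1,\,3q-1,\,4q-2$ I would expand $S_d$ directly and simplify in characteristic $2$. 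The digit-swap relating $(a,b)$ to $(a',b')$---moving a base-$q$ digit from the units place to the $q$-place (equivalently trading a subtracted $q^0$ for a subtracted $q^1$), which is weight preserving---should then trade a factor $[d]$ for $[d-1]^q$ in the relevant power-sum products and produce the constant $[1]^q/[2]$; the exponents $q-2$ in \eqref{f3} and $q-1$ in \eqref{f4} arise from iterating this elementary swap with the multiplicity forced by the repeated low digits of $b$. The identity $[2]=[1]^q+[1]$, equivalently $[d-1]^q=[d]-[1]$, is the workhorse for reconciling the constant $[1]^q/[2]$ with the $[d]\mapsto[d-1]^q$ replacement.

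The main obstacle is that $S_{<d}(b)=\sum_{e<d}S_e(b)$ is a genuine sum, so its ratio to $S_{<d}(b')$ is \emph{not} a priori a function of $d$ alone, and matching $\sum_d S_d(a)S_{<d}(b)$ against $\tfrac{[1]^q}{[2]}\sum_d S_d(a')S_{<d}(b')$ cannot be done naively term by term. After inserting \eqref{Sltd1} one must exhibit an exact cancellation, most plausibly a telescoping whose boundary contributions vanish as $d\to\infty$ and at $d=0$. Characteristic $2$ is indispensable here: the multinomial coefficients governing $S_{<d}$ of an even integer collapse under Lucas' theorem and all signs disappear, so the two product expansions align and the off-diagonal terms cancel; in odd characteristic these terms survive and no two-term relation of this shape can hold. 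I expect the heaviest bookkeeping to lie in \eqref{f3} and \eqref{f4}, where the weight-$q^3$ arguments together with the multiplicities $q-2$ and $q-1$ force one to track several families of factors at once, and in the sporadic identity \eqref{f21}, which does not fit the uniform pattern of \eqref{f1} and must be verified on its own. My order of attack would be \eqref{f1} as the base case, then \eqref{f2} and \eqref{f21} by relocating the swap, and finally \eqref{f3}, \eqref{f4} by combining the base computation with the Frobenius twist and the stated multiplicities.
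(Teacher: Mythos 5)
Your setup (term-by-term expansion of $\zeta(x,y)=\sum_d S_d(x)S_{<d}(y)$, evenness of the inner arguments, Frobenius in characteristic $2$) points in the same direction as the paper, but two of your central claims are wrong, and they sit exactly where the content of the theorem lies. First, \eqref{Sltd1} does \emph{not} evaluate every inner sum occurring here: it requires the argument to be $m(q-1)$ with $m\le q$, whereas after factoring out the common factors via \eqref{Sltd} the inner sums that actually matter are $S_{<d}(q^2-1)$ with $q^2-1=(q+1)(q-1)$, and $S_{<d}(2q^2-3q+1)$ with $2q^2-3q+1=(2q-1)(q-1)$, i.e.\ $m=q+1$ and $m=2q-1$, both out of range. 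For the former one needs Carlitz's evaluation \eqref{Sltdqjminus1}, which gives $\ell_d^{q^2-1}S_{<d}(q^2-1)=[d+1][d]^{q^2}/([1][2])$; for the latter (on which \eqref{f21} and \eqref{f2} entirely hinge) one needs the generating-function formula \eqref{mt1} specialized at $m=q-1$, which in characteristic $2$ collapses to $[d+1][d]^{2q^2-2q}/([1]^{q-1}[2])$. Note the $[2]$ in these denominators: the constant $[1]^q/[2]$ of the theorem can only come from there, never from expressions of the shape $[d]^{mq}/[1]^m$, which is all that \eqref{Sltd1} produces. Likewise, for the outer sums "expand $S_d$ directly and simplify" is not an argument; what is needed is Thakur's evaluation $\ell_d^{q+b}S_d(q+b)=1-b[d]^q/[1]$ for $1\le b<q$, which for $b=1,\,q-1$ in characteristic $2$ equals $[d+1]/[1]$ and handles $S_d(q+1)$ and $S_d(4q-2)=S_d(2q-1)^2$, together with the analogous closed form $\ell_d^{3q-1}S_d(3q-1)=[d+1]/[1]$ used for \eqref{f4}.

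Second, your "main obstacle" is illusory, and the telescoping you invoke to get around it is never made concrete and is not how the proof goes. It is true that $S_{<d}(b)/S_{<d}(b')$ alone is not constant in $d$, but the full ratio $S_d(a)S_{<d}(b)/\bigl(S_d(a')S_{<d}(b')\bigr)$ is: the factor $[d+1]$ produced by the outer evaluation cancels the factor $[d+1]$ produced by the inner one. For \eqref{f1}, for instance, $S_d(2)/S_d(q+1)=[1]\ell_d^{q-1}/[d+1]$ while $S_{<d}(q^2-1)/S_{<d}(q^2-q)=[d+1][1]^{q-1}/([2]\ell_d^{q-1})$, and the product is exactly $[1]^q/[2]$ for every $d$; summing over $d$ finishes the proof, and the same constancy (with exponents $q-2$, $q-1$) occurs in \eqref{f3} and \eqref{f4}. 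This "naive" term-by-term comparison, which you explicitly reject, is precisely the paper's argument. Having discarded it without supplying any explicit telescoping identity in its place, your outline has no working engine: \eqref{f21} and \eqref{f2} cannot even begin without \eqref{mt1}, and the cases you can begin lack the step that finishes them.
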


\begin{remarks}
\
\begin{enumerate}
 \item 
By specializing \eqref{f1}  to  $q = 2$,  we get the tuples $((2, 3), (3, 2))$, $((2, 5), (3, 4))$, $((2, 6), (3, 5))$ and
$((2,3), (3,2))$; these tuple are  obtained too from Theorem~\ref{thm} \eqref{thm3}.

\item 
By specializing \eqref{f21}  or \eqref{f2}  to $q = 2$ we get $((2,3), (3,2))$; from Theorem~\ref{thm} \eqref{thm3}, by taking $k = 2$, $s_1 = 1$, $s_2 = 2$, $k_1 = 1$, $l_1 = 0$ and $l_2 = 1$, we get the same tuple with ratio  $[2-1]^{2^1}/[2-0]^{2^{0}} = [1]^2/[2]$.

 \item 
By specializing \eqref{f1}  to  $q = 4$,  we get the tuples $((2, 15), (5, 12))$, $((2, 27), (5, 24))$ and many others;
None of these tuples are obtained from Theorem~\ref{main-thm} or Theorem~\ref{thm}.

\item By specializing \eqref{f21} to $q = 4$ we get $((2, 21), (5, 18))$ which is not covered by the main theorem.

\item By specializing \eqref{f2} to $q = 4$ we get $((4, 21), (7, 18))$,
 $((7, 21), (10, 18))$ and  others
None of these tuples are obtained from Theorem~\ref{main-thm} or Theorem~\ref{thm}.

\item By specializing \eqref{f3} to $q = 4$ we get 
$((14, 60), (20, 54))$,
$((14, 57), (20, 51))$ and others.
None of these tuples are covered directly by the main theorem.
The tuples $((7, 30), (10, 27))$, $((7, 27), (10, 24))$ and $((7, 15), (10, 12))$ are produced by \eqref{main-thm} when $q = 4$. 
From \eqref{f2}, we obtain $((7, 21), (10, 18))$; however, the tuples 
$((14, 57), (20, 51))$, $((14, 45), (20, 39))$ are not covered directly or indirectly by another theorem. 
 
\item 
By specializing \eqref{f4}  to $q = 4$ we get the tuples $((11,45), (20,36))$, $((11, 57), (20, 48))$ and $((11, 60), (20, 51))$ which are not covered by Theorem~\ref{main-thm}.

 \end{enumerate}

\end{remarks}

We finish the section with a conjecture. 

\begin{conjecture}
\label{conj-power-of-2}
Let $q$ be  a power of 2.   
Let $0 \le i \le 2$ and $0 \le j \le 2-i$.
Let $b =q^3 - iq^2 - jq-(q-i-j)$ and $b' = q^3 - iq^2 - (q-(2-j))q - (2-i-j)$.  Then for  $a \in \{4q, q^{2} + q - 1, q^{2} + q, q^{2} + 2q - 2, q^{2} + 2q - 1, q^{2} + 2q\}$,  we have 
\begin{align*}
\zeta(a,b) = \frac{[1]^{(q-2)q}}{[2]^{q-2}}\zeta(a+b-b', b') 
\end{align*}
\end{conjecture}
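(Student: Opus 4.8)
The plan is to follow the strategy behind \eqref{f3} in Theorem~\ref{thm-q-power-of-2}: expand both multizeta values into sums of products of power sums and reduce the asserted identity to closed-form evaluations of those sums, the relation itself having presumably been located numerically by the continued-fraction method. One may assume $q \ge 4$, since for $q = 2$ one has $(q-1)(q-2) = 0$; a direct computation gives $b - b' = (q-1)(q-2)$, so when $q = 2$ we get $b = b'$, $a + b - b' = a$, and the factor $[1]^{(q-2)q}/[2]^{q-2}$ collapses to $1$, making the statement trivial. For $q \ge 4$, writing $a' := a + b - b' = a + (q-1)(q-2)$, the tuples $(a,b)$ and $(a',b')$ have equal weight $a+b$, as they must for the ratio to be a candidate rational number.

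First I would write
\[
\zeta(a,b) = \sum_{d \ge 1} S_d(a)\, S_{<d}(b), \qquad \zeta(a',b') = \sum_{d \ge 1} S_d(a')\, S_{<d}(b'),
\]
and aim at the sharper, term-by-term identity
\[
S_d(a)\, S_{<d}(b) = \frac{[1]^{(q-2)q}}{[2]^{q-2}}\, S_d(a')\, S_{<d}(b') \qquad (d \ge 1),
\]
whose summation is the conjecture. The feature I would try hardest to exploit is that the constant $[1]^{(q-2)q}/[2]^{q-2}$ depends on $q$ alone --- not on $i$, $j$, nor on which of the six values $a$ takes --- while the second coordinate always moves by the fixed amount $b - b' = (q-1)(q-2)$. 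This uniformity suggests decoupling the coordinates: evaluate $S_{<d}(b)$ and $S_{<d}(b')$ for $b = q^3 - iq^2 - jq - (q-i-j)$ and $b' = q^3 - iq^2 - (q-(2-j))q - (2-i-j)$ from their base-$q$ digit expansions, and separately evaluate $S_d(a)$ and $S_d(a')$, using $S_d(q^{m} n) = S_d(n)^{q^{m}}$ to peel off powers of $q$ from the exponents. If the product of the two ratios is the stated constant for each $d$, summation finishes the proof; should constancy hold only after summation, I would instead argue by reindexing the double sum, as is standard for the two-term relations in \cite{LT14}.

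The main obstacle is the power-sum input. The proofs of Theorem~\ref{main-thm} and of \eqref{f3} rest precisely on the formulas \eqref{Sd} and \eqref{Sltd1}, and several of the exponents here lie outside their range: the remarks after the main theorem note that, already for $q = 4$, the numbers $19 = q^2+q-1$ and $20 = q^2+q$ --- both among our six first coordinates --- cannot be obtained from \eqref{Sd}, and that \eqref{Sltd1} similarly does not cover every exponent of the sizes arising here (the same remark cites $18$ and $21$). Deriving new closed forms, or sharp recursions, for $S_d$ and $S_{<d}$ at these exponents is therefore the crux, and is exactly the gap that keeps the statement conjectural. A second, more structural difficulty is to account for the uniformity of the ratio across all six choices of $a$ and all admissible $(i,j)$: a clean proof should isolate this in a single lemma describing the effect of the weight-shift $b \mapsto b'$ on the partial sums, rather than reducing to a separate digit computation in each case.
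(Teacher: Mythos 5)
The statement you are addressing is Conjecture~\ref{conj-power-of-2}; the paper offers no proof of it, so there is nothing on the paper's side to compare your argument against, and your proposal, which ends by pointing at missing power-sum evaluations, is likewise not a proof. Your setup, however, is correct and is the right one: $b-b' = q^2-3q+2=(q-1)(q-2)$, so the case $q=2$ is vacuous, and for $q\ge 4$ the natural attack is the term-by-term identity $S_d(a)S_{<d}(b)=\frac{[1]^{(q-2)q}}{[2]^{q-2}}S_d(a')S_{<d}(b')$, exactly as in the paper's proof of \eqref{f3}. One thing you missed: the pairs $(b,b')$ here are \emph{identical} to those in \eqref{f3}, so the $S_{<d}$ side is not an obstruction at all, and your appeal to the remark about \eqref{Sltd1} and the exponents $18$, $21$ is misplaced. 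Writing $b$ as $i$ copies of $q^2-q^2$, $j$ copies of $q^2-q$ and $q-i-j$ copies of $q^2-1$ (and $b'$ likewise with multiplicities $i$, $q-2+j$, $2-i-j$), formula \eqref{Sltd} gives, for every $d$ and independently of $(i,j)$,
\begin{align*}
\frac{\ell_d^{b}S_{<d}(b)}{\ell_d^{b'}S_{<d}(b')} = \frac{[1]^{(q-1)(q-2)}\,[d+1]^{q-2}}{[2]^{q-2}},
\end{align*}
so the whole conjecture reduces, term by term, to the single lemma you asked for: with $a'=a+(q-1)(q-2)$,
\begin{align*}
\frac{\ell_d^{a'}S_d(a')}{\ell_d^{a}S_d(a)} = \left(\frac{[d+1]}{[1]}\right)^{q-2}.
\end{align*}

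The genuine gap is on the $S_d$ side, as you say, but your diagnosis is too coarse, and the difference matters. For four of the six values of $a$ the reduced identity is provable with the paper's own tools, by combining the Frobenius twist $S_d(2n)=S_d(n)^2$ (which you mention but never exploit) with \eqref{Sd}, \eqref{Sdqjminus1} and \eqref{Sd-q-plus-b}. Indeed $\ell_d^{4q}S_d(4q)=1$; in characteristic $2$, $\ell_d^{q^2+2q}S_d(q^2+2q)=\bigl(1-2[d]^q/[1]\bigr)^q=1$; $\ell_d^{q^2+q}S_d(q^2+q)=[d+1]^q/[1]^q$ --- this last evaluation occurs verbatim in the proof of \eqref{f3}, so the paper's remark that $20$ ``cannot be obtained from \eqref{Sd}'' does not mean $S_d(q^2+q)$ is unknown --- and $\ell_d^{q^2+2q-2}S_d(q^2+2q-2)=[d+1]^2/[1]^2$, since $q^2+2q-2=2\bigl(q^2-((q/2-1)q+1)\bigr)$ falls under \eqref{Sd}. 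The corresponding values $a'$, namely $q^2+q+2$, $2q^2-2q+2$, $2q^2-q$, $2q^2-q+2$, are all divisible by $2$ and reduce the same way (e.g.\ $2q^2-q=q\bigl(q+(q-1)\bigr)$, $2q^2-2q+2=2\bigl(q^2-(q-1)\bigr)$), each giving $\ell_d^{a'}S_d(a')=\ell_d^{a}S_d(a)\cdot([d+1]/[1])^{q-2}$. So your plan, actually carried out, proves the conjecture for $a\in\{4q,\,q^2+q,\,q^2+2q-2,\,q^2+2q\}$. What genuinely resists --- and is the real content of the conjecture --- are the two odd values $a=q^2+q-1$ and $a=q^2+2q-1$: there the Frobenius reduction is unavailable, $a$ is neither of the form $q^j-1$ nor $q+b$ with $b<q$, and $q^3-a$, $q^3-a'$ have base-$q$ digit sums at least $2q-3\ge q$, so \eqref{Sd} cannot produce $S_d(a)$ or $S_d(a')$. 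No formula in the paper, and nothing in your proposal, covers these two cases; that is where the statement earns the label ``conjecture''.
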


\section{Proof}

The following formulas, which are consequences of Theorems 1 and 3 in  \cite{LT2015},
will be used in the proofs of the theorems.

For $1 \le s < q$ and $0 \le k_i <k$, 
we have
\begin{align}
\label{Sd}
S_d(q^k- \sum_{i=1}^s q^{k_i})& = \ell_d^{(s-1)q^k} S_d(q^k-q^{k_1}) \dotsm
S_d(q^k -q^{k_s}).
\end{align}

For $1\leq s \leq q$, and any $0\leq k_i\leq k$,  we have
\begin{align}
\label{Sltd}
S_{<d} ( \sum _{i=1}^s(q^k-q^{k_i}) )
=
\prod _{i=1}^s S_{<d}(q^k -q^{k_i}).
\end{align}
As a consequence of \eqref{Sltd} we have the formula 
\begin{align}
\label{Sltd1}
 \ell _d^{m(q-1)} S_{<d}(m(q-1)) = [d]^{mq}/[1]^m, \quad m \le q. 
\end{align}
(See \cite[3.3.1, p. 2329]{Tm}).

We also recall Carlitz' evaluations (see e.g., \cite[3.3.1, 3.3.2]{Tm})

\begin{align}
\label{Sdqjminus1}
S_d(q^j-1) = {\ell_{d+j-1}}/{\ell_{j-1} \ell_d^{q^j}}
\end{align} 

\begin{align}
\label{Sltdqjminus1}
S_{<d}(q^j-1) = { \ell_{d+j-1}  }/{ \ell_j \ell_{d-1}^{q^j}},
\end{align}

Using  \eqref{Sdqjminus1} and \eqref{Sltdqjminus1}, by straight calculations we get
\begin{align}
\label{rel1}
 S_d(q^j-1) = \frac{[j] }{[d]^{q^j}}S_{<d}(q^j-1).
\end{align}

\begin{proof}[Proof of Theorem \ref{thm} \eqref{thm1}]

Let $a = (q^k-1)q^n$ and $b = q^{n+k}-1$. Since $S_d(s_1,s_2) = S_d(s_1)S_{<d}(s_2)$, using  \eqref{rel1}, we get
 $S_d(a,b) =  \frac{ [k]^{q^n}  }{[n+k]} S_d(b,a)$:
\begin{align*}
\frac{S_d(q^k-1)^{q^n} S_{<d}(q^{n+k}-1)} {S_{d}(q^{n+k}-1) S_{<d}(q^k-1)^{q^n}  }
& = \left( \frac{S_d(q^k-1) }{S_{<d}(q^k-1) } \right)^{q^n} \frac{ S_{<d}(q^{n+k}-1)  } { S_{d}(q^{n+k}-1) }\\
& = \frac{ [k]^ {q^n} }{ [d]^{q^{n+k}} } \frac{ [d]^{q^{n+k}} } { [n+k] }\\
& = \frac{[k]^ {q^n} }{ [n+k]}.
\end{align*}

By definition, we have 
\begin{align*}
 \zeta(a,b) = \sum _{d=1}^\infty S_d(a,b) = 
 \frac{ [k]^{q^n}  }{[n+k]} \sum _{d=1}^\infty S_d(b,a) =  \frac{ [k]^{q^n}  }{[n+k]} \zeta(b,a).
\end{align*}

{\it Proof of Theorem \ref{thm} \eqref{thm2}}. 
Let $a = q^k - \sum _{i=1}^s q^{k_i}$, $b = sq^k - \sum _{i=1}^s q^{l_i}$, $a' = q^k - \sum _{i=1}^s q^{l_i}$ and $b' = sq^k -  \sum _{i=1}^s q^{k_i}$. Since $S_d(a,b) = S_d(a)S_{<d}(b)$, by using \eqref{rel1}, we have 

\begin{align*}
\frac{ S_d(a)S_{<d}(b) }{S_d(a')S_{<d}(b') } & = 
\frac{\ell_d^{(s-1)q^k} \prod _{i=1}^s S_d(q^k- q^{k_i}) \prod _{i=1}^s S_{<d}(q^k -q^{l_i})    }{ \ell_d^{(s-1)q^k} \prod _{i=1}^s S_d(q^k - q^{l_i})  \prod _{i=1}^s S_{<d}(q^k - q^{k_i})  }\\
& = \prod _{i=1}^s   \frac{S_d(q^k -q^{k_i})  }{ S_{<d}(q^k - q^{k_i}) } \prod _{i=1}^s \frac{ S_{<d}(q^k -q^{l_i}) }{ S_d(q^k-q^{l_i}) }\\
& = \prod _{i=1}^s  \frac{ [k-k_i]^{q^{k_i}} }{[d]^{q^k}} \prod _{i=1}^s  \frac{[d]^{q^k}  }{  [k-l_i]^{q^{l_i}}  }\\
& = \prod_{i=1}^s \frac{[k-k_i]^{q^{k_i}} }{ [k-l_i]^{q^{l_i}}  }.
\end{align*}
By summing over $d$ the result follows.

{\it Proof of Theorem~\ref{thm} \eqref{thm3}}.
Let
\begin{align*}
a & =  q^k - \sum _{i=1}^{s_1} q^{k_i}, &
b & =  s_2q^k - \sum _{j=1}^{s_2} q^{l_j},\\
a' &= q^k - \sum _{i=1}^{s_1} q^{l_i}, &
b' &= s_2q^k -  \sum _{j=1}^{s_1} q^{k_j} -\sum _{j=s_1+1}^{s_2}q^{l_j}. 
\end{align*}
Note that $a+b = a'+b'$. We have
\begin{align*}
\frac{ S_d(a)S_{<d}(b) }{S_d(a')S_{<d}(b') } & = 
\frac{\ell_d^{(s_1-1)q^k} \prod _{i=1}^{s_1} S_d(q^k- q^{k_i}) \prod _{j=1}^{s_1} S_{<d}(q^k -q^{l_i}) \prod _{j=s_1+1}^{s_2} S_{<d}(q^k -q^{l_j})   }{ \ell_d^{(s_1-1)q^k} \prod _{i=1}^{s_1} S_d(q^k - q^{l_i})  \prod _{j=1}^{s_1} S_{<d}(q^k - q^{k_j}) \prod _{j=s_1+1}^{s_2} S_{<d}(q^k  -q^{l_j} )    }\\
& = \prod _{i=1}^{s_1}   \frac{S_d(q^k -q^{k_i})  }{ S_{<d}(q^k - q^{k_i}) } \prod _{i=1}^{s_1} \frac{ S_{<d}(q^k -q^{l_i}) }{ S_d(q^k-q^{l_i}) }\\
& = \prod _{i=1}^{s_1}  \frac{ [k-k_i]^{q^{k_i}} }{[d]^{q^k}} \prod _{i=1}^{s_1}  \frac{[d]^{q^k}  }{  [k-l_i]^{q^{l_i}}  }\\
& = \prod_{i=1}^{s_1} \frac{[k-k_i]^{q^{k_i}} }{ [k-l_i]^{q^{l_i}}  }.
\end{align*}
\end{proof}

\begin{proof}[Proof of Theorem~\ref{main-thm}]
We have that 
\begin{align*}
a+b = (s_2+1)q^k - \sum _{i=1}^{s_1}q^{k_i} -\sum _{j=1}^{s_2}q^{l_j} = (s_2+1)q^k - \sum _{i=1}^{s_1}q^{k_i'} -\sum _{j=1}^{s_2}q^{l_j'} = a'+b'. 
\end{align*}

If $k_i' \in \{k_1, \dotsc, k_{s_1}\}$ for all $i$, then $l_j'\in \{l_1, \dotsc, l_{s_2}\}$ for all $j$, and, therefore, $a = a'$ and $b = b'$, so that  $\zeta(a,b)/\zeta(a',b') = 1$. 

Assume there exists at least one $i$ such that $k_i'\notin \{k_1, \dotsc, k_{s_1}\}$ and let $a$ be the maximum number of indices $i$ such that $k_i' \in \{l_1, \dotsc, l_{s_2}\}$;
let  $v_1, \dotsc, v_a, v_1'\dotsc, v_a'$ be  such that $k_{v_i'}' = l_{v_i}$; therefore, $k_i' \in \{k_1, \dotsc, k_{s_1}\}$ if $i \notin \{v_1'\dotsc, v_a'\}$; on the other hand, for each $k_i' \in \{l_1, \dotsc, l_{s_2}\}$, there is a $l_i' \in \{k_1, \dotsc, k_{s_1}\}$;  it follows that there exist $w_1, \dotsc, w_a, w_1', \dotsc, w_a'$ such that $l_{w_i'}' = k_{w_i}$. 
Since $S_d(q^k-q^{k_i})/S_d(q^k-q^{k_j'})=1$ if $k_i = k_j'$ and $S_{<d} (q^k -q^{l_i})/S_{<}(q^k-q^{l_j'})=1$ if $l_i = l_j'$, we have
\begin{align*}
\frac{S_d(a,b)}{S_d(a',b')} & = 
\frac{\ell_d^{(s_1-1)q^k} \prod_{i=1}^{s_1} S_d(q^k - q^{k_i}) \prod_{j=1} ^{s_2} S_{<d}(q^k - q^{l_j})      }{ \ell_d^{(s_1-1)q^k} \prod_{i=1}^{s_1} S_d(q^k - q^{k_i'}) \prod_{j=1} ^{s_2} S_{<d}(q^k - q^{l_j'})       }\\
& = 
\prod _{i=1}^{a} \frac{S_d(q^k -q^{k_{w_i} })  }{S_{<d}(q^k -q^{l_{w_i'}'}  )  }
\prod _{i=1}^{a} \frac{S_{<d} (q^k - q^{l_{v_i} })  }{S_d(q^k-q^{k_{v_i'}'} )} \\
& = \prod _{i=1}^a
\frac{ [k-k_{w_i}]^ {q^{k_{w_i}} }  }{[d]^{q^k}}
\prod _{i=1}^a 
\frac{ [d]^{q^k} }{ [k - l_{v_i}]^{q^{l_{v_i}} } }\\
& = 
\prod _{i=1}^a 
\frac{ [k-k_{w_i}]^ {q^{k_{w_i}} }  } { [k - l_{v_i}]^{q^{l_{v_i}} } }.
\end{align*} 

\end{proof}

\begin{proof}[Proof of Theorem~\ref{thm-q-power-of-2}]
 

From \cite[3.3.1, p. 2329]{Tm}, we know that 
\begin{align*}
\ell_d^{q+b} S_d(q+b) = 1 - b\frac{[d]^q}{[1]}, \text{ if } 1\le b <q, 
\end{align*}
For $q$ a power of 2 and $b = 1$ or $q-1$ we have
\begin{align}
\label{Sd-q-plus-b}
\ell_d^{q+b} S_d(q+b) & = 1- \frac{[d]^q}{[1]}
=
\frac{[d+1]}{[1]}.
\end{align}

We use the generating function (see \cite[3.2]{Tm})
\begin{align*}
\frac{x}{\ell _d \binom{x}{q^d}} & = 1 + \sum _{k>0,\text{`even'}} S_{<d}(k)x^k
\end{align*}
for $S_{<d}(k)$ for $k$   `even' to get the formula
\begin{align}
\ell_d ^{(q+m)(q-1)}S_{<d}((q+m)(q-1)) &= -\frac{m[d]^{q^2+(m-1)q}[d-1]^{q^2}}{[1]^{q+m-1} [2]} + \frac{[d]^{(q+m)q} }{[1]^{q+m}},%
\label{mt1}
\end{align}
where $1\le   m \le q+1$  (Also, see \cite[p. 40]{Lara2009}). 

Since $2 q^{2} - 3 q + 1 =  (2q-1)(q-1)$, by specializing \eqref{mt1} to $m = q-1$ we have 
\begin{align}
\label{mt1-1}
\ell_d^{2 q^{2} - 3 q + 1} S_{<d}(2 q^{2} - 3 q + 1) & = 
\frac{[d]^{2q^2-2q}[d-1]^{q^2}}{[1]^{2q-2} [2]} + \frac{[d]^{2q^2-q} }{[1]^{2q-1}}\\
& = \frac{ [d+1] [d]^{2q^2-2q}  }{[1]^{q-1} [2]}. \nonumber
\end{align}


From \eqref{Sdqjminus1}  and \eqref{Sltdqjminus1} we have
\begin{align*}
\ell_d^{q^2-1} S_d(q^2-1) & = \frac{[d+1]}{[1]},\\ 
\ell_d^{q^2-1} S_{<d}(q^2-1) &= \frac{[d+1][d]^{q^2}}{[1][2] },\\ 
\ell_d^{q-1} S_{<d}(q-1) & = \frac{[d]^q}{[1]}.
\end{align*}

From \eqref{Sltd1} with $m =q-1$, we have that 
\begin{align*}
\ell_d^{2q^2-4q+2} S_{<d}(2q^2-4q+2) & = \ell_d^{2m(q-1)} S_{<d}(2m(q-1)) \\ 
&=\ell_d^{2m(q-1)} S_{<d}(q-1)^{2(q-1)} \\
& = \frac{[d]^{2(q-1)q} }{[1]^{2(q-1)}}.
\end{align*}

Since $2q^2-q-1 = (q^2-q) + (q^2-1)$, from \eqref{Sltd} we have 
\begin{align*}
\ell_d^{2q^2-q-1}S_{<d}(2q^2-q-1) & =\ell_d^{q^2-q} S_{<d}(q^2-q) \cdot \ell_d^{q^2-1} S_{<d}(q^2-1)\\
& = \frac{[d]^{q^2}}{[1]^q} \cdot \frac{[d+1][d]^{q^2}}{[1][2] }\\
& = \frac{[d+1][d]^{2q^2}}{[1]^{q+1}[2]}.
\end{align*}

{\it Proof of \eqref{f1}.} 
By \eqref{Sltd} we have
\begin{align*}
 S_{<d}(sq^2- \sum _{i=1}^{s-1}q^{k_i}-1) & = S_{<d}(q^2-1) \prod _{i=1}^{s-1} S_{<d}(q^2-q^{k_i}),\\
S_{<d}(sq^2- \sum _{i=1}^{s-1}q^{k_i}-q) & = S_{<d}(q^2-q)\prod _{i=1}^{s-1} S_{<d}(q^2-q^{k_i}).
\end{align*}
Therefore, 
\begin{align*}
 \frac{S_d(2,b)}{ S_d(q+1, b')} & = 
 \frac{S_d(2) S_{<d}(q^2-1) }{S_d(q+1)  S_{<d}(q^2-q) }\\
 & = 
\frac{\ell_d^2 S_d(2) \cdot \ell_d^{q^2-1} S_{<d}(q^2-1) }{\ell_d^{q+1} S_d(q+1) \cdot \ell_d^{q^2-q}  S_{<d}(q^2-q) }\\
& = \frac{
\frac {[d+1][d]^{q^2}}{[1][2]}   
}{
\frac{[d+1]}{[1]} \cdot \frac {[d]^{q^2}}{[1]^q} 
} \\
& = \frac{[1]^q}{[2]}.
\end{align*}

{\it Proof of \eqref{f21}.}
Let $a = 2$, $b = 2q^2-3q+1$, $a' = q+1$ and $b' = 2(q-1)^2$. 
Then, 
\begin{align*}
\frac{S_d(2,b)}{S_d(q+1, b')} & = 
\frac{\ell_d^2 S_d(2) \cdot \ell_d^{2q^2-3q+1} S_{<d}(2q^2-3q+1)  }
{ 
\ell_d^{q+1} S_d(q+1) \cdot \ell_d^{2q^2-4q+2}S_{<d}(2q^2-4q+2)
}\\
& = \frac{
\frac{ [d+1] [d]^{2q^2-2q}  }{[1]^{q-1} [2]}
}{
\frac{[d+1]}{[1]} \cdot \frac{ [d]^{2(q-1)q} }{[1]^{2(q-1)}}
}\\
& = \frac{[1]^{q}}{[2]}.
\end{align*}

{\it Proof of \eqref{f2}.}
Let $a = q^2-\sum_{i=1}^s q^{k_i}$, $b = 2q^2-3q+1$, $a' = q^2-\sum _{i=1}^s q^{l_i}$ and $b' = 2(q-1)^2$. 
Since
\begin{align*}
 \sum q^{k_i} - \sum q^{l_i} = \sum q^{l_i} (q^{k_i-l_i}-1) = q-1,
\end{align*}
it follows that $a+b = a'+b'$.
Let $j$ such that $0 = l_j <k_j=1$. Then
\begin{align*}
\frac{ S_d(q^2-\sum q^{k_i})}
{S_d(q^2-\sum q^{l_i}) } & = 
\frac{\ell_d^{(s-1)q^2} S_d(q^2-q)  \prod _{i \ne j} S_d(q^2 -q^{k_i})  }
{\ell_d^{(s-1)q^2} S_d(q^2-1) \prod _{i \ne j} S_d(q^2 -q^{l_i})   }
 = \frac{S_d(q^2-q)  }{ S_d(q^2-1)}.
\end{align*}

Therefore,
\begin{align*}
 \frac{S_d(a,b)}{S_d(a',b')}
 & = \frac{\ell_d^{q^2-q}  S_d(q^2-q) \cdot \ell_d^{2q^2-3q+1}  S_{<d}(2q^2-3q+1)  }{\ell_d^{q^2-1}  S_d(q^2-1) \cdot \ell_d^{2q^2-4q+2} S_{<d}(2q^2-4q+2)  }\\
 & = \frac{ 
\frac{ [d+1][d]^{2q^2-2q}} {[1]^{q-1}[2]}
 }
{ 
\frac{[d+1]}{[1]} \cdot \frac {[d]^{2q^2-2q}} {[1]^{2q-2}} 
} \\
& = \frac{[1]^q}{[2]}.
\end{align*}

{\it Proof of \eqref{f3}.}
Let $0 \le i \le 2$ and $0 \le j \le 2-i$. 
Write $ a= 4q-2$, $a' = q^2+q$, $b =q^3 - iq^2 - jq-(q-i-j)$ and $b' = q^3 - iq^2 - (q-(2-j))q - (2-i-j)$. If $q = 2$, then $a = a' = 6$ and $b = 6-3i-j = b'$ so that $\zeta(a,b) = \zeta(a',b')$ and $[1]^{(q-2)q}/[2]^{q-2} =1$ and there is nothing to prove. 
So, we assume $q>2$.

We have
\begin{align*}
\frac{S_{<d}(b)}{ S_{<d}(b')} & = \frac{S_{<d}(q^3 - iq^2 - jq-(q-i-j))}{S_{<d}(q^3 - iq^2 - (q-(2-j))q - (2-i-j)) }\\
& = \frac{ S_{<d}(q^2-q^2)^i  S_{<d}(q^2-q)^j S_{<d}(q^2-1)^{q-i-j}  }{S_{<d}(q^2-q^2)^i  S_{<d}(q^2-q)^{q-2+j}  S_{<d}(q^2-1)^{2-i-j} }\\
& = \frac{S_{<d}(q^2-1)^{q-2}}{S_{<d}(q-1)^{(q-2)q}}
\end{align*}
Let $a = 4q-2$ and $a' = q^2+q$. Then,
 \begin{align*}
\frac{S_d(a,b)  }{S_d(a',b')} 
& = \frac{ 
\ell_d^{2(2q-1)} S_d(2q-1)^2 \cdot \ell_d^{(q-2)(q^2-1)}  S_{<d}(q^2-1)^{q-2}
}{
\ell_d^{(q+1)q} S_d(q+1)^q  \cdot \ell_d^{(q-1)(q-2)q}  S_{<d}(q-1)^{(q-2)q} 
}\\
& = 
\frac{
\frac{[d+1]^2  }{[1]^2} \left( \frac{[d+1][d]^{q^2}}{[1][2]}  \right)^{q-2}
}{
\frac{[d+1]^q  }{[1]^q}  \left( \frac{ [d]^{q}} {[1]} \right)^{(q-2)q} 
}\\
& =
\frac{[1]^{(q-2)q}}{[2]^{q-2}}.
 \end{align*}

{\it Proof of \eqref{f4}.}
\begin{align*}
\frac{S_d(3q-1, q \cdot q^2 - (q-1)-q^i)}
{S_d(q^2+q, q\cdot q^2- (q-1)q-q^i) } & =  
\frac{
S_d(3q-1)  \prod_{i=1}^{q-1} S_{<d}(q^2-1) S_{<d}(q^2-q^i) 
}{
S_d(q+1)^q \prod_{i=1}^{q-1} S_{<d}(q^2-q) S_{<d}(q^2-q^i)
}\\
& = 
\frac{
S_d(3q-1) S_{<d}(q^2-1)^{q-1}
}{
S_d(q+1)^q S_{<d}(q-1)^{(q-1)q}
}\\
& = 
\frac{
\ell_d^{3q-1} S_d(3q-1) \cdot \ell_d^{(q^2-1)(q-1)}  S_{<d}(q^2-1)^{q-1}
}{
\ell_d^{q^2+q} S_d(q+1)^q \cdot \ell_d^{(q-1)(q-1)q}   S_{<d}(q-1)^{(q-1)q}
}\\
& = 
\frac{ \frac{[d+1] }{[1]} \left(\frac{[d+1][d]^{q^2} }{[1][2]} \right)^{q-1}      }
{ \frac{[d+1]^q}{[1]^q } \frac{[d]^{(q-1)q^2}  }{[1]^{(q-1)q} }   }\\
& = \frac{ [1]^{(q-1)q}}  {[2]^{q-1}}.
\end{align*}

\end{proof}

\section{Data}

  The numerical exploration to find two term linear relations with rational coefficients, where both terms are multizeta, was done following the method of \cite{LT14} using SageMath on a
  computer server,  using   the continued fractions  in $\F_q((1/t))$.

We only list primitive tuples of tuples. The tuples marked with * are covered by theorems or conjectures in \cite{LT14}; the tuples marked with + are covered by the theorems.

\subsection{Data for \texorpdfstring{$q=2$}{q2}}

\begin{center}
\begin{longtable}{ *{3}{l}} 
\toprule
\multicolumn{3}{ c}{$q = 2$. Depth two by depth and weight at most 100 }\\ \midrule
\endhead

\bottomrule
\endfoot

((1, 3), (2, 2))*    &  
((2, 3), (3, 2))+    &  
((2, 5), (3, 4))*, + \\  
((2, 6), (3, 5))*, + &  
((3, 5), (4, 4))*    &  
((4, 7), (7, 4))+    \\  
((6, 7), (7, 6))+     &  
((4, 11), (7, 8))*, + &  
((4, 13), (6, 11))+  \\  
((4, 13), (7, 10))+  &  
((6, 11), (7, 10))+  &  
((4, 14), (7, 11))+ \\ 
((6, 13), (7, 12))+ &  
((6, 14), (7, 13))+ &  
((8, 15), (15, 8))+ \\  
((12, 15), (15, 12))+ &  
((14, 15), (15, 14))+ &  
((8, 23), (15, 16))*, + \\  
((8, 27), (12, 23))+ &  
((8, 27), (15, 20))+ &
((12, 23), (15, 20))+ \\  
((8, 29), (14, 23))+ &  
((8, 29), (15, 22))+ &  
((14, 23), (15, 22))+ \\  
((8, 30), (15, 23))+ &  
((12, 27), (15, 24))+ &  
((12, 29), (14, 27))+ \\  
((12, 29), (15, 26))+ &  
((14, 27), (15, 26))+ &  
((12, 30), (15, 27))+ \\  
((14, 29), (15, 28))+ &  
((14, 30), (15, 29))+ &  
((16, 31), (31, 16))+ \\  
((24, 31), (31, 24))+ &  
((28, 31), (31, 28))+ &  
((30, 31), (31, 30))+ \\  
((16, 47), (31, 32))*, + &   

((16, 55), (24, 47))+  &     
((16, 55), (31, 40))+  \\     
((24, 47), (31, 40))+  &  
((16, 59), (28, 47))+  &     
((16, 59), (31, 44))+  \\     
((28, 47), (31, 44))+  &     
((16, 61), (30, 47))+   &   
((16, 61), (31, 46))+  \\     
((30, 47), (31, 46))+  &     
((16, 62), (31, 47))+  &     
((24, 55), (31, 48))+ \\   
((24, 59), (28, 55))+  &     
((24, 59), (31, 52))+  &     
((28, 55), (31, 52))+  \\     
((24, 61), (30, 55))+  &   
((24, 61), (31, 54))+  &     
((30, 55), (31, 54))+  \\     
((24, 62), (31, 55))+  &     
((28, 59), (31, 56))+  &   
((28, 61), (30, 59))+  \\     
((28, 61), (31, 58))+  &     
((30, 59), (31, 58))+  &     
((28, 62), (31, 59))+  \\   
((30, 61), (31, 60))+  &     
((30, 62), (31, 61))+  &     
((32, 63), (63, 32))+  \\     

\end{longtable}
\end{center}

\begin{center}
\begin{longtable} { *{2}l } 
\toprule
\multicolumn{2}{ c}{$q = 2$. Depth three by depth two and weight at most 64. }\\ \midrule
\endhead
\bottomrule
\endfoot

((1, 1, 2), (1, 3))* &
((1, 1, 2), (2, 2))* \\
((1, 2, 4), (2, 5))* &
((1, 2, 4), (3, 4))* \\
((1, 2, 5), (2, 6))* &
((1, 2, 5), (3, 5))* \\
((1, 2, 5), (4, 4))* &
((1, 3, 4), (2, 6))* \\
((1, 3, 4), (3, 5))* &
((1, 3, 4), (4, 4))* \\
((2, 2, 4), (3, 5))* &
((1, 4, 8), (3, 10))+ \\
((3, 4, 8), (4, 11))* &
((3, 4, 8), (7, 8))* \\
((3, 8, 16), (7, 20))+ &
((7, 8, 16), (8, 23))* \\
((7, 8, 16), (15, 16))* &
((7, 16, 32), (15, 40))+ \\
((15, 16, 32), (16, 47))* &
((15, 16, 32), (31, 32))* \\ 

\end{longtable}
\end{center}

\begin{center}
\begin{longtable}{ *{3} c  }\toprule
\multicolumn{3}{c }{
$q = 2$. Depth four by depth two and weight at most 32
} \\ \toprule
\endhead
 
((1, 1, 2, 4), (2, 6))* &  
((1, 1, 2, 4), (3, 5))* &
((1, 1, 2, 4), (4, 4))* \\
((1, 2, 4, 8), (4, 11))* &
((1, 2, 4, 8), (7, 8))*  &
((1, 3, 4, 8), (4, 12))* \\
((1, 3, 4, 8), (6, 10))*  &
((1, 3, 4, 8), (8, 8))*   &
((1, 4, 8, 16), (7, 22))+ \\
((3, 4, 8, 16), (8, 23))*  &
((3, 4, 8, 16), (15, 16))* &

 \\
\bottomrule
\end{longtable}
\end{center}

\begin{center}
\begin{longtable} { *{3}l } \toprule
\multicolumn{3}{ c}{$q = 2$. Depth four by depth three and weight at most 31. }\\ \midrule
\endhead

((1, 1, 2, 4), (1, 2, 5))*  &  
((1, 1, 2, 4), (1, 3, 4))*  &  
((1, 1, 2, 4), (2, 2, 4))*  \\
((1, 2, 4, 8), (3, 4, 8))*  &
((1, 3, 4, 8), (2, 4, 10))*  &  
((1, 3, 4, 8), (2, 6, 8))*  \\ 
((1, 3, 4, 8), (4, 4, 8))*   &
((3, 4, 8, 16), (7, 8, 16))* &
\\
\bottomrule
\end{longtable}
\end{center}

\subsection{Data for \texorpdfstring{$q=3$}{q3}}

\begin{center}
\begin{longtable} { *{3}l } \toprule  
\multicolumn{3}{ c}{$q = 3$. Depth two by depth two and weight at most 93} \\ \toprule
\endhead
\midrule
\endfoot

\bottomrule
\endlastfoot

((1, 8), (3, 6))*  &  
((3, 8), (5, 6))+  &  
((5, 8), (7, 6))+  \\  
((6, 8), (8, 6))+   &  
((3, 14), (5, 12))*, +  &  
((3, 16), (5, 14))+  \\ 
((3, 16), (7, 12))+  &  
((5, 14), (7, 12))+  &  
((6, 14), (8, 12))+  \\  
((5, 16), (7, 14))+  &  
((6, 16), (8, 14))+  &  
((3, 20), (5, 18))*, +  \\  
((3, 22), (5, 20))*, +  &  
((3, 22), (7, 18))*, +  &  
((5, 20), (7, 18))*, +  \\  
((6, 20), (8, 18))*, +  & 
((3, 24), (5, 22))*, +  &  
((3, 24), (7, 20))*, +  \\  
((5, 22), (7, 20))*, +  &
((5, 22), (9, 18))*    &  
((7, 20), (9, 18))*  \\  
((6, 22), (8, 20))+  &
((5, 24), (7, 22))+  &  
((6, 24), (8, 22))+  \\  
((9, 26), (17, 18))+  &  
((15, 26), (17, 24))+  &  
((15, 26), (23, 18))+  \\  
((17, 24), (23, 18))+  &  
((17, 26), (25, 18))+  &  
((18, 26), (26, 18))+  \\  
((21, 26), (23, 24))+  &  
((23, 26), (25, 24))+  &  
((24, 26), (26, 24))+  \\  
((9, 44), (17, 36))*, +  &  
((9, 50), (15, 44))+  &
((9, 50), (17, 42))+  \\  
((9, 50), (23, 36))+  &  
((15, 44), (17, 42))+  &  
((15, 44), (23, 36))+  \\  
((17, 42), (23, 36))+  &  
((9, 52), (17, 44))+  &  
((9, 52), (25, 36))+  \\ 
((17, 44), (25, 36))+  &  
((18, 44), (26, 36))+  &  
((15, 50), (17, 48))+  \\  
((15, 50), (21, 44))+  &  
((15, 50), (23, 42))+  &  
((17, 48), (21, 44))+  \\  
((17, 48), (23, 42))+  &  
((21, 44), (23, 42))+  &  
((15, 52), (17, 50))+  \\  
((15, 52), (23, 44))+  & 
((15, 52), (25, 42))+  &  
((17, 50), (23, 44))+  \\  
((17, 50), (25, 42))+  &  
((23, 44), (25, 42))+  &  
((18, 50), (24, 44))+  \\  
((18, 50), (26, 42))+  &  
((24, 44), (26, 42))+  &  
((17, 52), (25, 44))+  \\  
((18, 52), (26, 44))+  &  
((9, 62), (17, 54))*,+  &  
((21, 50), (23, 48))+  \\  
((21, 52), (23, 50))+  &  
((21, 52), (25, 48))+  &  
((23, 50), (25, 48))+  \\  
((24, 50), (26, 48))+  &  
((23, 52), (25, 50))+  &  
((24, 52), (26, 50))+  \\  
((9, 68), (15, 62))*,+  &  
((9, 68), (17, 60))*,+  & 
((9, 68), (23, 54))*,+  \\  
((15, 62), (17, 60))*,+ &  
((15, 62), (23, 54))*,+  &  
((17, 60), (23, 54))*,+  \\  
((9, 70), (17, 62))*,+  & 
((9, 70), (25, 54))*,+  &  
((17, 62), (25, 54))*,+  \\  
((18, 62), (26, 54))*,+  &
((9, 74), (15, 68))+  &
((9, 74), (17, 66))+  \\
((9, 74), (21, 62))+  &
((9, 74), (23, 60))+  &
((15, 68), (17, 66))+  \\
((15, 68), (21, 62))+  &
((15, 68), (23, 60))+  &
((17, 66), (21, 62))+  \\
((17, 66), (23, 60))+  &
((21, 62), (23, 60))+  &
((9, 76), (15, 70))+   \\
((9, 76), (17, 68))+  &
((9, 76), (23, 62))+  &
((9, 76), (25, 60))+  \\
((15, 70), (17, 68))+  &
((15, 70), (23, 62))+  &
((15, 70), (25, 60))+  \\
((17, 68), (23, 62))+  &
((17, 68), (25, 60))+  &
((23, 62), (25, 60))+  \\
((18, 68), (24, 62))+  &
((18, 68), (26, 60))+  &
((24, 62), (26, 60))+  \\
((9, 78), (17, 70))+  &
((9, 78), (25, 62))+  &
((17, 70), (25, 62))+  \\
((18, 70), (26, 62))+  &

((15, 74), (17, 72)) + &
((15, 74), (21, 68)) + \\
((15, 74), (23, 66)) + &
((17, 72), (21, 68)) + &
((17, 72), (23, 66)) + \\
((21, 68), (23, 66)) + &

((15, 76), (17, 74))+ &
((15, 76), (21, 70))+ \\
((15, 76), (23, 68))+ &
((15, 76), (25, 66))+ &
((17, 74), (21, 70))+ \\
((17, 74), (23, 68))+ &
((17, 74), (25, 66))+ &
((21, 70), (23, 68))+ \\
((21, 70), (25, 66))+ &
((23, 68), (25, 66))+ &

((18, 74), (24, 68))+ \\
((18, 74), (26, 66))+ &
((24, 68), (26, 66))+ &

((15, 78), (17, 76))+ \\
((15, 78), (23, 70))+ &
((15, 78), (25, 68))+ &
((17, 76), (23, 70))+ \\
((17, 76), (25, 68))+ &
((23, 70), (25, 68))+ &


\end{longtable}
\end{center}

\begin{remark}
There are no zetalike tuples  $(a,b)$ with $82 \le a+b \le 93$ (See \cite[4.4.1]{KL2016}). 
\end{remark}

\begin{center}
\begin{longtable} { *{2}c } \toprule  
\multicolumn{2}{ c}{$q = 3$. Depth three by depth two and weight at most 20 and others} \\ \midrule
\endhead
\bottomrule
\endfoot

((1, 2, 6), (1, 8))* &
((1, 2, 6), (3, 6))* \\
((2, 18, 54), (8, 66))+ & 
((8, 54, 162), (26, 198))+ \\
((728, 4374, 13122), (2186, 16038))+

 \end{longtable}
\end{center}


\subsection{Data for \texorpdfstring{$q=4$}{q4}}

\begin{center}
\begin{longtable}{ *{4}{l}}
\caption{
The tuples marked with + are covered by the main theorem. The tuples marked with \dag{}   are covered by \eqref{f1}. Tuples marked with \ddag{} are covered by \eqref{f4}. The tuple ((2, 21), (5, 18)) is covered by \eqref{f21}. Tuples marked with C are covered by Conjecture~\ref{conj-power-of-2}.
 }\label{tableq4}\\
\toprule
\multicolumn{4}{ c}{$q = 4$. Depth two by depth two and weight at most 104}\\ \midrule
\endfirsthead
\midrule
\multicolumn{4}{ c}{$q = 4$. Depth two by depth two and weight at most 104}\\ \midrule
\endhead

 \midrule
\endfoot

\bottomrule
\endlastfoot

((1, 15), (4,12))*    & 
((2, 15), (5, 12))\dag   & 
((4, 15), (7, 12))+   \\ 
((7, 15), (10, 12))+  &
((2, 21), (5, 18))*   & 
((8, 15), (11, 12))+  \\
((4, 21), (7, 18))\ddag   & 
((10, 15), (13, 12))+ &
((11, 15), (14, 12))+  \\
((12, 15), (15, 12))+ &
((7, 21), (10, 18))\ddag  &
((2, 27), (5, 24))*,\dag   \\
((8, 21), (11, 18))\ddag  &
((4, 27), (7, 24))*,+   &
((10, 21), (13, 18))\ddag \\
((11, 21), (14, 18))\ddag &
((12, 21), (15, 18))\ddag &
((2, 30), (5,27))*, \dag   \\
  
((4, 30), (7, 27))+   &
((7, 27), (10, 24))+  &
((8, 27), (11, 24))+  \\
((7, 30), (10, 27))+  &
((7, 30), (13, 24))+  &
((10, 27), (13, 24))+ \\
((8, 30), (11, 27))+  &                    
((11, 27), (14, 24))+  &                    
((12, 27), (15, 24))+  \\
((10, 30), (13, 27))+  &
((2, 39), (5, 36))\dag  &
((11, 30), (14, 27))+   \\ 
((12, 30), (15, 27))+   &
((4, 39), (7, 36))*,+     &
((2, 42), (5, 39))\dag     \\
((4, 42), (7, 39))*,+    &
((7, 39), (10, 36))*,+  &

((2, 45), (5, 42))\dag    \\
((8, 39), (11, 36))*,+   &
((4, 45), (7, 42))+    &
((4, 45), (10, 39))+   \\
((4, 45), (13, 36))+   &
((7, 42), (10, 39))+   &
((7, 42), (13, 36))+   \\
((10, 39), (13, 36))+  & 
((19, 30), (25, 24))C  &
((8, 42), (11, 39))+   \\
((11, 39), (14, 36))+  &

((12, 39), (15, 36))+ &
((7, 45), (10, 42))+  \\
((7, 45), (13, 39))+   &
((10, 42), (13, 39))+  &
((2, 51), (5, 48))\dag    \\
((8, 45), (11, 42))+   &
((8, 45), (14, 39))+   &
((11,42), (14, 39))+   \\
((23, 30), (29, 24))C  & 
((12, 42), (15, 39))+  &

((4, 51), (7, 48))*,+  \\
((10, 45), (13, 42))+ &
((2, 54), (5, 51))\dag    &
((11, 45), (14, 42))+  \\
((11, 45), (20, 36))  &
((12, 45), (15, 42))+ &
((4, 54), (7, 51))*,+\\ 
((7, 51), (10, 48))*,+ &

((2, 57), (5, 54))\dag   &
((8, 51), (11, 48))*,+  \\
((14, 45), (20, 39))  &

((4, 57), (7, 54))*,+  &
((4, 57), (10, 51))*,+ \\
((4, 57), (13, 48))*,+  &
((7, 54), (10, 51))*,+  &
((7, 54), (13, 48))*,+  \\
((10, 51), (13, 48))*,+ &
((16, 45), (22, 39))C   &
((19, 42), (25, 36))C  \\

((2, 60), (5, 57))\dag    &
((8, 54), (11, 51))*,+  &
((11, 51), (14, 48))*,+ \\
((12, 51), (15, 48))*,+ &

((4, 60), (7, 57))*,+  &
((4, 60), (13, 51))*,+ \\
((7, 57), (10, 54))*,+ &
((7, 57), (13, 51))*,+ &
((10, 54), (13, 51))*,+  \\
((19, 45), (25, 39))C  &

((8, 57), (11, 54))+  &
((8, 57), (14, 51))+  \\
((11, 54), (14, 51))+  &
((20, 45), (26, 39))C  &
((23, 42), (29, 36))C  \\

((12, 54), (15, 51))+  &
((7, 60), (10, 57))+  &
((7, 60), (13, 54))+  \\
((10, 57), (13, 54))+  &
((22, 45), (28, 39))C  &
((8, 60), (11, 57))+  \\
((11, 57), (14, 54))+  &
((11, 57), (20, 48))  &
((23, 45), (29, 39))C \\
((12, 57), (15, 54))+  &
((24, 45), (30, 39))C  &
((10, 60), (13, 57))+  \\

((8, 63), (23, 48))  &
((11, 60), (14, 57))+  &
((11, 60), (20, 51))  \\
((14, 57), (20, 51))  &

((12, 60), (15, 57))+  &
((16, 57), (22, 51))C \\
((19, 54), (25, 48))C  &
((11, 63), (23, 51))  &
((19, 57), (25, 51))C \\

((14, 63), (23, 54)) &
((14, 63), (29, 48)) &
((20, 57), (26, 51))C \\
((23, 54), (29, 48))C &
((16, 63), (31, 48))+ &
((19, 60), (25, 54))C \\
((22, 57), (28, 51))C &
((23, 57), (29, 51))C &
((24, 57), (30, 51))C \\
((20, 63), (23, 60))  &
((20, 63), (29, 54))  &
((23, 60), (29, 54))C \\
((7, 78), (13, 72))  &
((22, 63), (31, 54)) &
((26, 63), (29, 60)) \\
((7, 84), (13, 78))  &
((28, 63), (31, 60))+ &
((28, 63), (43, 48))+ \\
((31, 60), (43, 48))+ &

((31, 63), (46, 48))+ &
((8, 87), (23, 72))  \\
((32, 63), (47, 48))+ &
((4, 93), (10, 87))  &
((7, 90), (13, 84))  \\
((19, 78), (25, 72)) &
((7, 93), (13, 87))  &

((8, 93), (14, 87)) \\
((8, 93), (23, 78)) &
((8, 93), (29, 72)) &
((14, 87), (23, 78)) \\
((14, 87), (29, 72)) &
((23, 78), (29, 72)) &
((38, 63), (53, 48)) \\

((16, 87), (31, 72)) &
((19, 84), (25, 78)) &
((40, 63), (43, 60))+ \\
((40, 63), (55, 48))+ &
((43, 60), (55, 48)) &

\end{longtable}
\end{center}

Since 
\begin{align*}
R_1 =  \frac{\zeta(11,45)}{\zeta(20,36)} = 
  \frac{\zeta(11,45) / \zeta(7,21)^2}{\left(\zeta(10,18)/\zeta(7,21)\right)^2 }
\end{align*}
and both $\zeta(11,45)/\zeta(14,42)$ and $\zeta(10,18)/\zeta(7,21)$ rational, it follows that $R_1$ is rational. In a similar way, 
\begin{align*}
R_2 = \frac{\zeta(11,57)}{\zeta(20,48)} = \frac {\zeta(11,57)/\zeta(7,27)^2} { (\zeta(10,24)/\zeta(7, 27) )^2} 
\end{align*}
and it follows that $R_2$ is rational.  


\subsection{Data for \texorpdfstring{$q=5$}{q5}}

\begin{center}
\begin{longtable}{ *{3}{l}} \toprule
\multicolumn{3}{ c}{$q = 5$. Depth two by depth two and weight at most 80}\\ \midrule
\endhead

((1, 24), (5,20))*    & 
((5, 24), (9, 20))+   &
((9, 24), (13, 20))+  \\
((10, 24), (14, 20))+  &
((13, 24), (17, 20))+  &
((14, 24), (18, 20))+  \\
((15, 24), (19, 20))+   &
((17, 24), (21, 20))+   &
((18, 24), (22, 20))+   \\
((19, 24), (23, 20))+   &
((20, 24), (24, 20))+   &
((5, 44), (9, 40))*,+  \\
((5, 48), (9, 44))+  &
((5, 48), (13, 40))+  &
((9, 44), (13, 40))+  \\
((10, 44), (14, 40))+  &
((9, 48), (13, 44))+  &
((9, 48), (17, 40))+  \\
((13, 44), (17, 40))+  &
((10, 48), (14, 44))+  &
((10, 48), (18, 40))+  \\
((14, 44), (18, 40))+  &

((15, 44), (19, 40))+  &
((13, 48), (17, 44))+  \\
((13, 48), (21, 40))+  &
((17, 44), (21, 40))+  &
((14, 48), (18, 44))+  \\
((14, 48), (22, 40))+  &
((18, 44), (22, 40))+  &
((15, 48), (19, 44))+  \\
((15, 48), (23, 40))+  &
((19, 44), (23, 40))+  &
((20, 44), (24, 40))+  \\

((17, 48), (21, 44))+  &
((18, 48), (22, 44))+  &
((19, 48), (23, 44))+  \\
((20, 48), (24, 44))+  &
((5, 64), (9, 60))*,+  &

((5, 68), (9, 64))*,+  \\
((5, 68), (13, 60))*,+  &
((9, 64), (13, 60))*,+  &
((10, 64), (14, 60))*,+  \\
((5, 72), (9, 68))+  &
((5, 72), (13, 64))+  &
((5, 72), (17, 60))+  \\
((9, 68), (13, 64))+  &
((9, 68), (17, 60))+  &
((13, 64), (17, 60))+  \\
((10, 68), (14, 64))+  &
((10, 68), (18, 60))+  &
((14, 64), (18, 60))+  \\
((15, 64), (19, 60))+  &

\\ \bottomrule
\end{longtable}
\end{center}

\subsection{Data for \texorpdfstring{$q=9$}{q9}}

\begin{center}
\begin{longtable}{ *{3}{l}} \toprule
\multicolumn{3}{ c}{$q = 9$. Depth two by depth two and weight at most 34 and others}\\ \midrule
\endhead
((72, 80), (80, 72))+
\\ \bottomrule
\end{longtable}
\end{center}

\section{Acknowledgments}

I am grateful to thank Dinesh Thakur for their careful reading and for  suggesting numerous improvements to this paper.

\bibliographystyle{alpha}

\begin{thebibliography}{{Lar}09}

\bibitem[And86]{A86}
Greg~W. Anderson.
\newblock {$t$}-motives.
\newblock {\em Duke Math. J.}, 53(2):457--502, 1986.

\bibitem[Bro12]{Brown2012}
Francis Brown.
\newblock Mixed tate motives over $\mathbb{Z}$.
\newblock {\em Annals of Mathematics}, 175(2):949--976, 2012.

\bibitem[Car35]{C35}
Leonard Carlitz.
\newblock On certain functions connected with polynomials in a {G}alois field.
\newblock {\em Duke Math. J.}, 1(2):137--168, 1935.

\bibitem[Che17]{C17}
Huei-Jeng Chen.
\newblock Anderson-{T}hakur polynomials and multizeta values in positive
  characteristic.
\newblock {\em Asian J. Math.}, 21(6):1135--1152, 2017.

\bibitem[CPY19]{CPY19}
Chieh-Yu Chang, Matthew~A. Papanikolas, and Jing Yu.
\newblock An effective criterion for {E}ulerian multizeta values in positive
  characteristic.
\newblock {\em J. Eur. Math. Soc. (JEMS)}, 21(2):405--440, 2019.

\bibitem[Dac21]{TuanNgoDac2021}
Tuan~Ngo Dac.
\newblock On {Z}agier-{H}offman's conjectures in positive characteristic.
\newblock {\em Annals of Mathematics}, 194(1):361--392, 2021.

\bibitem[Gos96]{G96}
David Goss.
\newblock {\em Basic structures of function field arithmetic}.
\newblock Springer-Verlag, Berlin, 1996.

\bibitem[Hof97]{Hoffman1997}
Michael~E. Hoffman.
\newblock The algebra of multiple harmonic series.
\newblock {\em Journal of Algebra}, 194(2):477--495, 1997.

\bibitem[KL16]{KL2016}
Yen-Liang Kuan and Yi-Hsuan Lin.
\newblock Criterion for deciding zeta-like multizeta values in positive
  characteristic.
\newblock {\em Exp. Math.}, 25(3):246--256, 2016.

\bibitem[{Lar}09]{Lara2009}
Jos{\'e}~Alejandro {Lara Rodr{\'i}guez}.
\newblock Some conjectures and results about multizeta values for
  $\mathbb{F}_q[t]$.
\newblock Master's thesis, Universidad Aut\'onoma de Yucat\'an, M\'exico, Mayo
  2009.

\bibitem[LR22]{Lara2022Preprint}
Jos\'{e}~Alejandro Lara~Rodr\'{\i}guez.
\newblock A two term relation between multizeta for $\mathbb{F}_q[t]$.
\newblock {\em Preprint}, 2022.

\bibitem[LRT14]{LT14}
Jos\'{e}~Alejandro Lara~Rodr\'{\i}guez and Dinesh~S. Thakur.
\newblock Zeta-like multizeta values for {$\mathbb{F}_q[t]$}.
\newblock {\em Indian J. Pure Appl. Math.}, 45(5):787--801, 2014.

\bibitem[LRT15]{LT2015}
Jos\'{e}~Alejandro Lara~Rodr\'{\i}guez and Dinesh~S. Thakur.
\newblock Multiplicative relations between coefficients of logarithmic
  derivatives of {$\mathbb{F}_q$}-linear functions and applications.
\newblock {\em J. Algebra Appl.}, 14(9):1540006, 13, 2015.

\bibitem[RT21]{LT2021}
Jos\'e Alejandro~Lara Rodr{\'\i}guez and Dinesh~S. Thakur.
\newblock Zeta-like {Multizeta} {Values} for higher genus curves.
\newblock {\em Journal de Th\'eorie des Nombres de Bordeaux}, 33(2):553--581,
  2021.

\bibitem[Tha04]{T}
Dinesh~S. Thakur.
\newblock {\em Function field arithmetic}.
\newblock World Scientific Publishing Co. Inc., River Edge, NJ, 2004.

\bibitem[Tha09]{Tm}
Dinesh~S. Thakur.
\newblock Relations between multizeta values for $\mathbb{F}_q[t]$.
\newblock {\em International Mathematics Research Notices},
  2009(12):2318--2346, June 2009.

\bibitem[Tha17]{T17}
Dinesh~S. Thakur.
\newblock Multizeta values for function fields: a survey.
\newblock {\em J. Th\'{e}or. Nombres Bordeaux}, 29(3):997--1023, 2017.

\bibitem[Tod18]{Todd2018}
George Todd.
\newblock A conjectural characterization for fq(t)-linear relations between
  multizeta values.
\newblock {\em Journal of Number Theory}, 187:264--287, 2018.

\bibitem[Zag94]{Zagier}
Don Zagier.
\newblock Values of zeta functions and their applications.
\newblock In {\em First {E}uropean {C}ongress of {M}athematics, {V}ol.\ {II}},
  Progr. Math. 120, pages 497--512. Birkh{\"a}user, Basel, 1994.

\bibitem[Zha16]{Z16}
Jianqiang Zhao.
\newblock {\em Multiple zeta functions, multiple polylogarithms and their
  special values}, volume~12 of {\em Series on Number Theory and its
  Applications}.
\newblock World Scientific Publishing Co. Pte. Ltd., Hackensack, NJ, 2016.

\end{thebibliography}

\end{document}